\renewcommand{\ge}{\geqslant}
\renewcommand{\le}{\leqslant}
\newcommand{\floor}[1]{\left[ {#1} \right]}
\newcommand{\ost}[1]{\left\{ {#1} \right\}}
\newcommand{\abs}[1]{\left| {#1} \right|}
\newcommand{\skob}[1]{\left( {#1} \right)}
\newtheorem{Th}{Theorem}
\newtheorem{Lemma}[Th]{Lemma}
\newtheorem*{Hyp}{Conjecture}
\newtheorem{note}{Remark}
\def\F{\Bbb F}
\def\E{\mathsf {E}}
\author{A.~S.~Volostnov}
\title{On some double sums with multiplicative characters
\footnote{
This work was supported by grant Russian Scientific Foundation RSF 14--11--00433.}
}
\date{}
\begin{document}
\maketitle

\bigskip

\begin{center}
    Annotation.
\end{center}

{\it \small
    We obtain a new upper bound for binary sums with multiplicative characters over variables belong to some sets, having small additive doubling.
}

\section*{Introduction}

Let $p$ be a prime number, $\F_p$ be the prime field and $\chi$ be a nontrivial multiplicative character modulo $p$.
In the paper we consider a problem of obtaining good upper bounds for the exponential sum
\begin{equation}\label{f:def_sum}
    \sum_{a\in A,\, b\in B}\chi(a+b) \,,
\end{equation}
where $A,B$ are arbitrary subsets of the field  $\F_p$.
Exponential sums of such a type were studied by various authors, see e.g. \cite{Chang}--\cite{Kar2}.
There is a well--known hypothesis on sums (\ref{f:def_sum}) which is called the graph Paley conjecture, see the history of the question in \cite{Chang} or  \cite{Shkr_res}, for example.
\begin{Hyp}[Paley graph]
  Let $\delta>0$ be a real number, $A,B\subset\mathbb{F}_p$ be arbitrary sets with  $\abs{A}>p^{\delta}$ and $\abs{B}>p^\delta$. Then there exists a number $\tau=\tau(\delta)$ such that for any sufficiently
 large prime number $p$ and all nontrivial characters $\chi$
 the following holds
\begin{equation}
    \abs{\sum_{a\in A,\, b\in B}\chi(a+b)}<p^{-\tau}\abs{A}\abs{B} \,.
\label{Paley}\end{equation}
\end{Hyp}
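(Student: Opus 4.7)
Since the statement is the central open problem in the area, my plan is not a complete argument but a blueprint built from the standard moment/Weil machinery combined with additive-combinatorial structure theory.

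The first step is to isolate the outer variable. Writing
\[ S = \sum_{b\in B}\sum_{a\in A}\chi(a+b), \]
H\"older's inequality applied with parameter $2k$ gives
\[ \abs{S}^{2k} \le \abs{B}^{2k-1}\sum_{b\in B}\abs{\sum_{a\in A}\chi(a+b)}^{2k} \le \abs{B}^{2k-1}\sum_{b\in\F_p}\abs{\sum_{a\in A}\chi(a+b)}^{2k}. \]
Expanding the $2k$-th power and interchanging summations reduces matters to bounding the character sum
\[ \sum_{b\in\F_p}\chi\!\left(\prod_{i=1}^k (a_i+b)\right)\overline{\chi}\!\left(\prod_{i=1}^k (a_i'+b)\right), \]
to which the Weil bound for polynomial character sums applies. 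This produces a diagonal main contribution, counting $2k$-tuples $(a_1,\dots,a_k,a_1',\dots,a_k')$ for which the two polynomials in $b$ agree as multisets --- essentially a higher additive energy of $A$ --- plus a square-root error of size $k p^{1/2}$.

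The second step is to control this diagonal contribution. For an arbitrary set $A$ only the trivial estimate on the higher energy is available, and substituting it back yields nothing beyond the Weil bound for a single sum. The idea is to iterate: if the additive energy of $A$ is large, then by Balog--Szemer\'edi--Gowers there is a large $A'\subset A$ of small doubling, and one transfers the original sum from $A$ to $A'$ at the cost of a polynomial factor. On $A'$ the Pl\"unnecke--Ruzsa inequalities give sharp bounds on all higher energies in terms of the doubling constant, which is precisely the input the moment estimate demands; a symmetric iteration on $B$ then closes the loop.

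The main obstacle --- and the reason the conjecture remains open --- is the regime in which both $A$ and $B$ are highly unstructured, for example Sidon-like, so that $A-A$ has cardinality close to $\abs{A}^2$ and similarly for $B$. In this regime the higher energies saturate their trivial bounds, the Balog--Szemer\'edi--Gowers step supplies no usable substructure, and the moment method stalls well before the Paley threshold $p^{-\tau}\abs{A}\abs{B}$. The restricted setting treated in the present paper sidesteps precisely this obstruction by \emph{assuming} small additive doubling, thereby making the Pl\"unnecke--Ruzsa input available from the outset.
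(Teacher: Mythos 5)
The statement you were asked about is the Paley graph conjecture, which the paper states only as motivation: it is an open problem, the paper contains no proof of it, and indeed it records that an affirmative answer is known only when $\abs{A}>p^{\frac12+\delta}$, $\abs{B}>p^{\delta}$. Your submission is, by its own admission, a blueprint rather than a proof, so it does not establish the statement; to that extent your assessment of the situation is honest and correct. What the paper actually proves (Theorem 3) is a conditional result in the structured regime $\abs{A},\abs{B}>p^{\frac13+\delta}$ with small doubling, via Freiman's theorem, a bound on $\E^{\times}_3$, and Weil's bound for the interval $I$ --- i.e. exactly the ``sidestep'' you describe in your last paragraph.

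Two concrete points in the blueprint would fail even as a conditional argument. First, in the moment step the off-diagonal Weil contribution is of size about $2k\sqrt p$ for each of the $\abs{A}^{2k}$ tuples, so after the H\"older step one needs roughly $2k\sqrt p\le \abs{B}p^{-2k\tau}$, forcing $\abs{B}>p^{1/2}$ no matter how large $k$ is chosen and no matter how small the higher energy of $A$ is; thus the obstruction is not only the diagonal term for unstructured $A$, but the error term itself, which is why this route cannot go below the Karatsuba threshold and why the paper instead applies Weil only to the short interval $I$ produced after dilating by the progression $A_0$. Second, the proposed Balog--Szemer\'edi--Gowers iteration does not transfer the problem: large additive energy of $A$ yields a large subset $A'$ with small doubling, but $\sum_{a\in A,b\in B}\chi(a+b)$ is not controlled by the corresponding sum over $A'$, since nothing is known about $A\setminus A'$; moreover Pl\"unnecke--Ruzsa bounds sumsets, not higher energies --- the energy bound the method needs is the nontrivial estimate $\E^{\times}_3(A)\ll \abs{A+A}^{15/4}\abs{A}^{-3/4}\log\abs{A}$ (Theorem 4 of the paper), which comes from sum-product theory rather than from Pl\"unnecke--Ruzsa. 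So even granting small doubling from the outset, your outline would need to be replaced by something like the paper's actual mechanism (Freiman structure, the progression $A_0$ and interval $I$, Lemma 5 on the system $b_1/a=b_1'/a'$, $b_2/a=b_2'/a'$, and Lemma 6) to yield a bound of Paley type.
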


Let us say a few words about the name of the hypothesis.
The \emph{Paley graph} is the graph $G(V,E)$ with the vertex set  $V=\mathbb{F}_p$ and the set of edges $E$ such that $(a,b)\in E$ iff $a-b$ is a quadratic residue. To make the graph non--oriented we assume that  $p\equiv1\pmod 4$.
Under these conditions if one puts $B=-A$  in (\ref{Paley})
and  takes $\chi$ equal to the Legendre symbol
then an interesting statement would follow, namely the size of the maximal clique in the Paley graph (as well as its independent number) grows
slowly than   $p^\delta$ for any positive $\delta$.

Unfortunately, at the moment we know few facts about the hypothesis.
 An affirmative answer
 was obtained
 just in the situation  $\abs{A}>p^{\frac12+\delta}$, $\abs{B}>p^{\delta}$, see \cite{Kar}---\cite{Kar2}. Even in the case $\abs{A}\sim\abs{B}\sim p^{\frac12}$ inequality~(\ref{Paley})
  is unknown, see \cite{Kar2}.
  However, nontrivial bounds of sum (\ref{f:def_sum}) can be obtained for structural sets $A$ and $B$ with  weaker restrictions for the sizes of the sets, see~\cite{Chang}, \cite{FI}, \cite{Kar}.
Thus, in paper~\cite{Chang} Mei--Chu Chang proved such an estimate provided one of the sets
 $A$ or $B$ has small sumset.
Recall that the {\it sumset} of two sets $X, Y \subseteq \mathbb{F}_p$ is the set
$$X+Y = \ost{x+y\,:\,x\in X, y\in Y} \,.$$
\begin{Th}[Chang]
Let $A,\,B\subset\mathbb{F}_p$ be arbitrary sets, $\chi$  be a nontrivial multiplicative character modulo $p$
and $K,\delta$ be positive numbers with
\begin{gather*}
\abs{A}>p^{\frac49+\delta},\\
\abs{B}>p^{\frac49+\delta},\\
\abs{B+B}<K|B| \,.
\end{gather*}
Then there exists $\tau=\tau(\delta, K)>0$ such that the inequality
$$\abs{\sum_{a\in A,\, b\in B}\chi(a+b)}<p^{-\tau}\abs{A}\abs{B}$$
holds for all $p>p(\delta, K)$.
\label{t:Chang}
\end{Th}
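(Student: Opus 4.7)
The overall idea is to use H\"older's inequality to raise $S=\sum_{a\in A,\,b\in B}\chi(a+b)$ to a high even power $2k$ and then combine complete character sum estimates of A.~Weil with the small doubling of $B$ via the Pl\"unnecke--Ruzsa inequality. First, H\"older gives
\[
|S|^{2k}\le |A|^{2k-1}\sum_{a\in\F_p}\Bigl|\sum_{b\in B}\chi(a+b)\Bigr|^{2k}.
\]
Expanding the inner $2k$-th power reduces the task to estimating, for each pair of $k$-tuples $(\vec b,\vec b^{\,\prime})\in B^k\times B^k$, the complete character sum
\[
W(\vec b,\vec b^{\,\prime})=\sum_{a\in\F_p}\chi\!\left(\frac{\prod_{i=1}^k(a+b_i)}{\prod_{j=1}^k(a+b'_j)}\right).
\]
By Weil's theorem $|W|\le 2k\sqrt{p}$ whenever the rational function is not a $d$-th power (with $d=\mathrm{ord}\,\chi$); on the diagonal, where $\{b_i\}$ and $\{b'_j\}$ coincide as multisets, one can only use $|W|\le p$, and the number of such diagonal tuples is at most $k!\,|B|^k$.

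Without any structural input on $B$, this combination yields
\[
|S|^{2k}\le |A|^{2k-1}\bigl(k!\,|B|^k\, p+2k\sqrt{p}\,|B|^{2k}\bigr),
\]
which beats the trivial bound only when $|A|\gg\sqrt{p}$, well above the hypothesized threshold $p^{4/9+\delta}$. The small doubling of $B$ is brought in at this point by replacing $\chi(a+b)$ with $\chi\bigl((a-c)+(b+c)\bigr)$ for $c$ drawn from $B$, or more generally from an iterated sumset. By Pl\"unnecke--Ruzsa, $|mB|\le K^{m}|B|$ and $|mB-mB|\le K^{2m}|B|$, so averaging over $c\in mB$ and applying a Cauchy--Schwarz in the resulting variables converts $S$ into a character sum over thicker sets with a controlled nonnegative weight $r$; thanks to small doubling the $\ell^{1}$ and $\ell^{2}$ norms of $r$ remain essentially as large as in the trivial case, costing only factors $K^{O(m)}$. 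Effectively this replaces the length $|A|$ in the Weil term by $|A|\cdot|mB|$, which for $m$ chosen in terms of $\delta$ exceeds $\sqrt{p}$.

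The final step is to reapply the H\"older/Weil estimate in this enlarged setup and then optimize in $k$ and $m$: the threshold $4/9$ emerges as the balance point at which the diagonal contribution $k!\,|B|^k p$ and the Weil contribution $\sqrt p\,|B|^{2k}$ both become strictly smaller than $(|A||B|)^{2k}$ under the hypothesis $|A|,|B|>p^{4/9+\delta}$. The main obstacle I expect is exactly this bookkeeping: one has to arrange the Cauchy--Schwarz / averaging steps so that the $K^{O(m)}$ losses from iterating Pl\"unnecke--Ruzsa are absorbed into the final constant $\tau=\tau(\delta,K)>0$ and do not eat up the power saving $p^{-\tau}$. Tracking the interplay of the three steps (H\"older, small doubling, Weil) is what forces precisely the exponent $4/9$ rather than any smaller number.
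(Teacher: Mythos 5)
Your first step (H\"older with exponent $2k$, expansion into complete sums $\sum_{a\in\F_p}\chi\bigl(\prod_i(a+b_i)/\prod_j(a+b'_j)\bigr)$, Weil off the diagonal, $k!\abs B^k$ diagonal tuples) is the classical Karatsuba scheme and is correct as far as it goes; as you yourself note, it only proves the theorem for $\abs A>p^{1/2+\delta}$, and everything therefore hinges on the amplification step. That step, as you describe it, has a genuine gap. The identity $a+b=(a-c)+(b+c)$ carries no information: if you set $N(u,v)=\abs{\ost{(a,b,c)\in A\times B\times mB\,:\,u=a-c,\ v=b+c}}$, then for every $s$ one has $\sum_{u+v=s}N(u,v)=\abs{mB}\cdot\abs{\ost{(a,b)\in A\times B: a+b=s}}$, so averaging over $c\in mB$ reproduces the original sum exactly, and no ``thickening'' has occurred. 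To gain anything you must decouple $u$ from $v$, and any Cauchy--Schwarz that does so introduces weights whose $\ell^2$ (or $\ell^\infty$) size is governed by quantities such as $\abs{mB\cap(A-u)}$, i.e.\ by the additive energy of $A$ with $mB$ --- and $A$ is an arbitrary set here, so the hypotheses give no control over these. Worse, after the decoupling the weight on the inner variable $v$ still depends on the outer variable $u$, so when you expand the $2k$-th power you cannot pull the weights out and apply Weil to the complete sum in $u$. The assertion that the procedure ``effectively replaces $\abs A$ by $\abs A\cdot\abs{mB}$ in the Weil term at a cost of $K^{O(m)}$'' is precisely what would need to be proved, and it is not; note also that $\abs{mB}$ may be as small as about $m\abs B$ (take $B$ an arithmetic progression), so even the heuristic count is more fragile than it looks.

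For comparison: this paper only quotes Chang's theorem from \cite{Chang}, and the actual proof (mirrored by the proof of Theorem~\ref{main_theorem} here, with the roles of $A$ and $B$ exchanged) uses the small doubling in a genuinely multiplicative way. One invokes Freiman's theorem to place the small-doubling set inside a generalized arithmetic progression of bounded dimension, and then shifts $a+b\mapsto a+b+xy$ with $x$ in a contracted copy $A_0$ of that progression and $y$ in an interval $I=\floor{1,p^{\alpha}}$; the point is that $A-A_0I$ is only a constant factor larger than $A$, while multiplicativity of $\chi$ turns the inner sum into $\sum_{y\in I}\chi(u_1+y)\overline{\chi}(u_2+y)$, which is handled by Weil through a Davenport--Erd\H{o}s type estimate (Lemma~\ref{davenport}), together with a bound on the number of solutions of $b_1/x=b_1'/x'$, $b_2/x=b_2'/x'$ (a multiplicative-energy input as in Lemma~\ref{system_solution}). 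It is this interval/progression structure, not Pl\"unnecke--Ruzsa thickening by iterated sumsets, that produces the saving and the specific threshold; with purely additive shifts the three steps you propose cannot be made to close, so the argument as sketched does not prove the theorem.
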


In paper~\cite{Shkr_Vol} Chang's assumption $\abs{A}>p^{\frac49+\delta},\abs{B}>p^{\frac49+\delta}$ was refined.

\begin{Th}
Let $A,\,B\subset\mathbb{F}_p$ be sets and $K,L,\delta > 0$ be numbers with
\begin{gather*}
\abs{A}>p^{\frac{12}{31}+\delta},\\
\abs{B}>p^{\frac{12}{31}+\delta},\\
\abs{A+A}<K\abs{A},\\
\abs{A+B}<L\abs{B}.
\end{gather*}
Then for any nontrivial multiplicative character $\chi$ modulo $p$ one has
\begin{equation*}
    \abs{\sum_{a\in A,\, b\in B}\chi(a+b)}\ll\sqrt{\frac{L\log 2K}{\delta\log p}} \cdot \abs A\abs B
\end{equation*}
provided $p>p(\delta, K, L)$.
\end{Th}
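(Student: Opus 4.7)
The plan is to follow the H\"older--Weil--Pl\"unnecke strategy of Chang~\cite{Chang} and of the earlier paper~\cite{Shkr_Vol}. Fix an integer parameter $k$, to be chosen at the end. Apply H\"older's inequality to $S=\sum_{a\in A,\,b\in B}\chi(a+b)$ in the variable $b$:
\[
\abs{S}^{2k}\le \abs{B}^{2k-1}\sum_{b\in B}\abs{\sum_{a\in A}\chi(a+b)}^{2k}.
\]
Expand the $2k$-th power and exchange the order of summation; the right-hand side becomes $\abs{B}^{2k-1}$ times a sum over $\vec{a}=(a_1,\dots,a_k)$ and $\vec{a}'=(a'_1,\dots,a'_k)$ in $A^k$ of inner sums
\[
\sum_{b\in B}\chi\!\skob{\frac{\prod_i(b+a_i)}{\prod_j(b+a'_j)}}.
\]

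I would then complete each inner sum over $b\in B$ to a sum over $b\in\F_p$ via the Fourier expansion of $\mathbf{1}_B$, and invoke Weil's theorem on the resulting complete character sums. When the rational function above is not a $d$-th power in $\F_p(x)$ (with $d$ the order of $\chi$), Weil gives a saving of size $O(k\sqrt{p})$; the remaining \emph{diagonal} tuples, for which the numerator and denominator share roots appropriately, contribute a trivial sum of size $p$ per term and must be counted separately. Here the hypothesis $\abs{A+A}<K\abs{A}$ is used: via the Pl\"unnecke--Ruzsa inequality together with a higher-energy refinement, the number of diagonal $2k$-tuples is at most $K^{O(k)}\abs{A}^{k+1}$. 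The hypothesis $\abs{A+B}<L\abs{B}$ enters when one controls the Fourier tail of $\mathbf{1}_B$: after a shift in $b$, the inner character sum is effectively supported on $A+B$, so that this completion step costs a factor $L$ rather than the trivial $\abs{A+B}$.

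Assembling these ingredients one obtains a bound of the shape
\[
\abs{S}^{2k}\le\abs{A}^{2k}\abs{B}^{2k}\cdot E(k,K,L,p),
\]
where $E$ depends polynomially on $K^k$, on $L$, and on $\sqrt{p}/\abs{A}$. Taking $2k$-th roots and choosing $k$ of order $\delta\log p/\log 2K$ balances the $K^{O(k)}$-factor from the diagonal against the size hypothesis $\abs A,\abs B>p^{12/31+\delta}$, and yields the claimed bound $\sqrt{L\log 2K/(\delta\log p)}\,\abs A\abs B$. The main obstacle is the combinatorial estimate on the diagonal under $\abs{A+A}<K\abs{A}$: a direct application of Pl\"unnecke--Ruzsa gives a bound too weak to produce the exponent $12/31$, and one must feed in a sharper higher-energy inequality (of Katz--Koester or Balog--Szemer\'edi--Gowers type) and exploit the $\abs{A+B}<L\abs{B}$ hypothesis in tandem; once this is in place, the final optimisation over $k$ is routine.
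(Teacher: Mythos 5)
There is a genuine gap, and it is located at the heart of your argument rather than in the technical details you defer. After expanding $\abs{S}^{2k}$ you propose to complete each inner sum over $b\in B$ to a sum over all of $\F_p$ and apply Weil to the rational function $\prod_i(b+a_i)/\prod_j(b+a'_j)$. For every non-diagonal tuple this gives a saving of order $k\sqrt p$ (times a completion factor of size about $\log p$), so after dividing by $(\abs A\abs B)^{2k}$ the non-diagonal contribution to $\skob{\abs S/(\abs A\abs B)}^{2k}$ is of order $k\sqrt p\log p/\abs B$. In the range of the theorem, $\abs B\sim p^{\frac{12}{31}+\delta}<p^{1/2}$, this quantity is at least $p^{7/62-\delta}$ and tends to infinity, no matter how you choose $k$; no improvement of the diagonal count can repair this, because the problem is in the main (Weil) term. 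This is exactly the $\sqrt p$ barrier that the theorem is designed to break, and the completion-in-$b$ strategy is the one thing the actual proof must, and does, avoid. Relatedly, the hypotheses are misassigned in your sketch: the diagonal tuples (those for which the rational function is a $d$-th power) are counted by an elementary multiset argument, roughly $k^{k}\abs A^{k}$, with no use of $\abs{A+A}<K\abs A$; and the assertion that $\abs{A+B}<L\abs B$ lets the completion step ``cost a factor $L$'' has no mechanism behind it --- completing over $b$ costs $\sqrt p$ regardless of the additive structure of $B$.

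The proof of this statement (given in the cited paper of Shkredov and Volostnov; the same scheme is carried out for Theorem~\ref{main_theorem} in the present paper) uses the small doubling of $A$ in an entirely different way. By Freiman's theorem $A$ lies in a generalized arithmetic progression of dimension $d\le C(K)$; one builds a shrunken progression $A_0$ and an interval $I=\floor{1,p^{\alpha}}$ so that $A-A_0I$ is only a bounded factor larger than $A$, and one averages the sum over the shifts $a\mapsto a+xy$, $x\in A_0$, $y\in I$. After Cauchy--Schwarz in $(x,y)$ and H\"older with exponent $r\approx d/\delta$, Weil enters only through the complete sum $\sum_{u_1,u_2\in\F_p}\abs{\sum_{t\in I}\chi(u_1+t)\overline{\chi}(u_2+t)}^{2r}$ (Lemma~\ref{davenport}), where the variables $u_1,u_2$ genuinely run over all of $\F_p$; the arithmetic hypotheses are then spent on bounding the weights $\nu(u_1,u_2)$, i.e.\ on counting solutions of $b_1/x=b_1'/x'$ with $x,x'\in A_0$ and $b_i,b_i'$ in a translate of $B$ --- this is where $\abs{A+B}<L\abs B$ and the energy/incidence estimates enter, and where the threshold $12/31$ (here improved to $1/3$ via $\E^{\times}_3$) comes from. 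Your proposal contains neither the shift-averaging device nor the energy count in this configuration, so as written it cannot reach any exponent below $1/2$, let alone $12/31$.
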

In our paper we improve previous results and prove the following theorem.

\begin{Th}[Main result]\label{main_theorem}
Let $A,\,B\subset\mathbb{F}_p$ be sets and $K,L,\delta > 0$ be numbers with
\begin{gather}
\label{0}\abs{A},\abs{B}>p^{\frac13+\delta},\\
\label{1}\abs{A+A}<K|A|,\\
\label{-1}\abs{B+B}<L\abs B.
\end{gather}
Then there exists $\tau=\tau(\delta, K)=\delta^2(\log 2K)^{-3+o(1)}$ such that the inequality
$$\abs{\sum_{\substack{a\in A,\,b\in B}}\chi(a+b)}<p^{-\tau}\abs A\abs B$$
holds for all $p>p(\delta, K, L)$. Here $\chi$  is a nontrivial multiplicative character modulo $p$.
\end{Th}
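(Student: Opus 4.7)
The plan is to refine the Hölder-plus-Weil strategy of \cite{Chang} and \cite{Shkr_Vol}, exploiting both doubling hypotheses more efficiently and possibly applying Hölder twice. Denote $S=\sum_{a\in A,b\in B}\chi(a+b)$ and $T(b)=\sum_{a\in A}\chi(a+b)$, and fix an even integer $2k$ (to be optimized later as a function of $\delta$ and $\log K$). A first Hölder step in the $b$-variable yields
\begin{equation*}
    |S|^{2k}\le|B|^{2k-1}\sum_{b\in B}|T(b)|^{2k},
\end{equation*}
and expanding $|T(b)|^{2k}$ reduces the problem to bounding incomplete character sums
\begin{equation*}
    \Sigma(\vec a,\vec a')=\sum_{b\in B}\chi\Bigl(\prod_{i}(a_i+b)\prod_{j}(a'_j+b)^{-1}\Bigr)
\end{equation*}
indexed by tuples $(\vec a,\vec a')\in A^k\times A^k$.

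To deal with the incompleteness of the $b$-sum one uses $|B+B|\le L|B|$ together with Pl\"unnecke--Ruzsa: either by a second Hölder step absorbing $B$ into a longer sum with more symmetries, or by replacing $B$ with a structured enveloping superset, one passes to a complete sum over $\F_p$ at a multiplicative cost of $L^{O(k)}$. On the complete sum Weil's theorem gives $\le 2k\sqrt p$ per non-degenerate tuple, and up to $p$ for the degenerate ones where the rational function in $\chi$ becomes a perfect $d$-th power ($d=\mathrm{ord}\,\chi$). The degenerate tuples in $A^k\times A^k$ are counted via the additive-energy bound $E_{2k}(A)\le K^{O(k)}|A|^{2k-1}$ that Pl\"unnecke--Ruzsa provides from $|A+A|\le K|A|$, giving at most $K^{O(k)}|A|^k$ degenerate configurations.

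Putting these pieces together produces a main contribution from the Weil term and a secondary contribution from the diagonal; the crucial point is that, for $|A|,|B|>p^{1/3+\delta}$, the two can be balanced so that the overall bound beats the trivial estimate by a power of $p$ once $k\asymp\delta^{-1}\log 2K$. Optimizing gives $\tau=\delta^2(\log 2K)^{-3+o(1)}$. The principal difficulty lies in this balance: below $|B|=p^{1/2}$ the Weil bound $p^{1/2}$ is not directly a saving, so both the $B$-extension cost $L^{O(k)}$ and the diagonal count $K^{O(k)}|A|^k$ must be kept tight, and the exponent $\tfrac13$ is the smallest threshold at which these combinatorial savings can still dominate when $k$ is chosen optimally. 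Pushing below $\tfrac13$ would require a qualitatively new ingredient beyond Weil and Pl\"unnecke--Ruzsa.
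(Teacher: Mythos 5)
There is a genuine gap, and it sits at the heart of your plan. After the H\"older step $|S|^{2k}\le|B|^{2k-1}\sum_{b\in B}|T(b)|^{2k}$, completion of the $b$-sum is not where the difficulty lies (the summands $|T(b)|^{2k}$ are nonnegative, so $\sum_{b\in B}\le\sum_{b\in\F_p}$ costs nothing and needs no hypothesis on $B$); the fatal point is what you get afterwards. Once the sum is over all of $\F_p$, Weil contributes $|A|^{2k}\cdot 2k\sqrt p$ from the non-degenerate tuples, so your bound is at best $|S|^{2k}\ll|B|^{2k-1}\bigl(D\,p+2k|A|^{2k}\sqrt p\bigr)$, and the second term alone forces $|B|\gg p^{1/2+2k\tau}$ for the conclusion $|S|<p^{-\tau}|A||B|$ --- no matter how small the degenerate count $D$ is and no matter how $k$, $K$, $L$ are chosen. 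Since the theorem lives in the regime $|B|\sim p^{1/3+\delta}<\sqrt p$, no balancing of the diagonal against the Weil term can rescue this skeleton; the loss is in the main term, not in the pieces you propose to control with Pl\"unnecke--Ruzsa. Two subsidiary claims are also off: there is no known way to ``pass to a complete sum over $\F_p$ at cost $L^{O(k)}$'' from $|B+B|\le L|B|$ (and, as noted, none is needed), and the degenerate tuples for $\prod_i(a_i+b)\prod_j(a_j'+b)^{-1}$ being a $d$-th power are counted by coincidences of coordinates, a purely combinatorial $k^{O(k)}|A|^k$ bound in which additive energy and $K$ play no role; small doubling would, if anything, make additive energies large, not small.

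The paper's proof avoids the $\sqrt p$ obstruction by a different mechanism, and both doubling hypotheses enter in places quite unlike your sketch. Freiman's theorem puts $A$ inside a generalized progression of dimension $d\le C(K)$; one shifts $a\mapsto a+xy$ with $x$ in a shrunk copy $A_0$ of the progression and $y$ in a short interval $I=\floor{1,p^{\alpha}}$, which enlarges $A$ only by a factor $e^{C(K)}2^d$ while creating a long multiplicative average. After Cauchy--Schwarz and H\"older one faces (i) the moment $\sum_{u_1,u_2}\bigl|\sum_{t\in I}\chi(u_1+t)\overline{\chi}(u_2+t)\bigr|^{2r}$, handled by the Weil-based Lemma~\ref{davenport}, which yields a saving relative to $\abs I=p^{\alpha}$ rather than relative to $p$; and (ii) the second moment $\sum_{u_1,u_2}\nu(u_1,u_2)^2$, i.e.\ the number of solutions of $b_1/x=b_1'/x'$, $b_2/x=b_2'/x'$, which is where the real novelty lies: it is bounded through the third multiplicative energy estimate $\E^{\times}_3(X)\ll|X+X|^{15/4}|X|^{-3/4}\log|X|$ (Theorem~\ref{third_energy}, a sum--product ingredient), applied to $A_0$ (doubling $2^d$, hence the $K$-dependence) and to $B$ (doubling $L$). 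The threshold $\frac13$ then comes from $|A||B|^2>p^{1+3\delta}$ in the resulting bound, not from a diagonal-versus-Weil balance. So your proposal is not a viable alternative route; it is missing both the shift-by-$A_0I$ amplification and the multiplicative-energy input that make the sub-$\sqrt p$ range accessible.
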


It is interesting that our new approach does not allow to improve any bound for the ternary character sum from~\cite{Shkr_Vol}.

\section*{Definitions and notation}

Recall that the (Minkowski) {\it sumset}  of two sets $A$ and $B$ from the field $\mathbb{F}_p$
is the set
$$A+B = \ost{a+b\,:\,a\in A, b\in B} \,.$$
In a similar way one can define the {\it difference} of two sets $A$ and $B$ as
$$A-B = \ost{a-b\,:\,a\in A, b\in B};$$
Also for an arbitrary $g\in \mathbb{F}_p$ by  $g+A$ denote the sumset $\ost{g}+A$.

Besides, we denote
$$[a,b]=\ost{i\in\mathbb{Z}\,:\,a\le i\le b} \,.$$
Also we will use the
{\it third moment of convolution} of set $A$, see \cite{Schoen_shkr}
$$\E^{\times}_3(A) = \abs{\ost{(a_1,a_2,a_3,a_1',a_2',a_3')\in A^6\,:\,a_1/a_1'=a_2/a_2'=a_3/a_3'}} \,.$$

{\it Generalized arithmetic progression of dimension $d$} is a set $P\subset\mathbb{F}_p$
of the form
\begin{equation}P=a_0+\ost{\sum_{j=1}^dx_ja_j\,:\,x_j\in\floor{0,H_j-1}} \,,\label{representation}\end{equation}
where  $a_0,a_1,\ldots,a_d$ are
some elements from $\mathbb{F}_p$; $P$ is said to be {\it proper} if all of the sums in (\ref{representation})
are distinct (in the case $\abs P=\prod_{j=1}^d H_j$).

\begin{Th}[Freiman]
 For any set $A\subseteq \mathbb{F}_p$ such that $\abs{A+A}\le K\abs A$ there is a generalized arithmetic progression $P$ of dimension $d$ containing  $A$ such that $d\le C(K)$ and $\abs P\le e^{C(K)}\abs A$.
 Here $C(K) >0$ is a constant which depends on $K$ only but not on the set $A$.
\end{Th}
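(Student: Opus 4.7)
The plan is to prove this classical theorem of Freiman following the Ruzsa scheme, via the Pl\"unnecke--Ruzsa inequalities, Ruzsa's modelling lemma, Bogolyubov's lemma, and Ruzsa's covering lemma. Since the ambient group is $\F_p$ rather than $\mathbb{Z}$, the whole point of the modelling step will be to move $A$ into a cyclic group where Fourier analysis and geometry of numbers can be applied directly.

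First, I apply Pl\"unnecke--Ruzsa to the doubling hypothesis $\abs{A+A}\le K\abs{A}$ to obtain $\abs{nA-mA}\le K^{n+m}\abs{A}$ for all integers $n,m\ge 0$; in particular $\abs{8A-8A}\le K^{16}\abs{A}$, which is what drives the modelling step. Next, I invoke Ruzsa's modelling lemma: there exists $A'\subseteq A$ with $\abs{A'}\ge\abs{A}/8$ that is Freiman $8$-isomorphic to a subset $\tilde A$ of the cyclic group $\mathbb{Z}/N\mathbb{Z}$ for some integer $N\le K^{16}\abs{A}$. The isomorphism respects every sumset structure we will need up to order $8$.

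Third, I apply Bogolyubov's lemma inside $\mathbb{Z}/N\mathbb{Z}$. Since $\tilde A$ has density $\alpha\ge K^{-16}/8$, the set $2\tilde A-2\tilde A$ contains a Bohr set $B(\Lambda,\rho)$ with $\abs{\Lambda}\le\alpha^{-2}$ and $\rho\ge\alpha/4$. Inside this Bohr set, the standard geometry-of-numbers argument (Minkowski's second theorem applied to the lattice of frequencies in $\Lambda$) produces a proper generalized arithmetic progression $\tilde P$ of dimension $d\le\abs{\Lambda}\le C(K)$ with $\abs{\tilde P}\ge c(K)N\ge c'(K)\abs{A}$. Pulling $\tilde P$ back through the Freiman $8$-isomorphism yields a proper GAP $P_0\subseteq 2A'-2A'\subseteq 2A-2A$ of the same dimension and size.

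Finally, Ruzsa's covering lemma closes the argument: since $\abs{A+P_0}\le\abs{3A-2A}\le K^{5}\abs{A}\le(K^{5}/c'(K))\abs{P_0}$, the set $A$ is covered by at most $K^{5}/c'(K)$ translates of $P_0-P_0$. Absorbing these finitely many translates into the progression by adjoining $O(\log(K/c'(K)))$ further generators (written via binary expansion of the translate labels) yields a proper GAP $P\supseteq A$ with $\dim P\le C(K)$ and $\abs{P}\le e^{C(K)}\abs{A}$, as required. The main technical obstacle is the Bogolyubov/geometry-of-numbers step, which must deliver a \emph{proper} GAP of bounded dimension and of density a positive function of $K$ inside the Bohr set; the remaining steps are routine applications of the Pl\"unnecke--Ruzsa and covering machinery, and the final transition from $\mathbb{Z}/N\mathbb{Z}$ back to $\F_p$ is harmless since the isomorphism preserves the additive structure of $P_0$ we use.
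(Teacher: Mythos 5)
The paper itself does not prove this statement: it is Freiman's classical structure theorem, quoted as a black box (with Sanders' survey cited for the quantitative form $C(K)=(\log 2K)^{3+o(1)}$, which is a remark and not part of the statement you are asked to prove). Your proposal is the standard Ruzsa proof of Freiman's theorem --- Pl\"unnecke--Ruzsa, modelling, Bogolyubov plus geometry of numbers, Ruzsa covering --- and in outline it is the right and essentially canonical route; note also that the statement does not require $P$ to be proper, so your insistence on properness is harmless but unnecessary.

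Two points in your sketch need repair. First, Ruzsa's modelling lemma as you invoke it is a statement about subsets of a torsion-free group (or it is proved there), whereas your $A$ lives in $\mathbb{F}_p$; you cannot literally ``apply Ruzsa's modelling lemma'' to $A\subseteq\mathbb{F}_p$ to land in $\mathbb{Z}/N\mathbb{Z}$ with $N\le K^{16}\abs A$ without an intermediate rectification step. The standard fix: if $\abs A$ is comparable to $p$ (say $\abs A\ge cp/K^{O(1)}$) take $P=\mathbb{F}_p$ itself, an arithmetic progression of dimension $1$ with $\abs P=p\le e^{C(K)}\abs A$, and you are done; otherwise a random dilation $x\mapsto\lambda x$ followed by pigeonholing into a short interval of $[0,p)$ produces a large subset of $A$ that is Freiman $8$-isomorphic to a set of integers, after which your modelling step applies verbatim. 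Your remark that the modelling step is needed ``to move $A$ into a cyclic group'' misses the point slightly --- $\mathbb{F}_p$ is already cyclic; what modelling buys is bounded index, i.e.\ $N=O_K(\abs A)$, and that is exactly where torsion-freeness/rectification enters. Second, the last absorption step is wrong as stated: after Ruzsa covering gives $A\subseteq X+P_0-P_0$ with $\abs X\le K^5/c'(K)$, you cannot encode the translates by ``binary expansion of the labels'' with only $O(\log\abs X)$ extra generators, since the elements of $X$ are arbitrary and need not be $\{0,1\}$-combinations of any small generating set. The correct (standard) move is to adjoin one generator per element of $X$, i.e.\ $\abs X$ extra dimensions each of length $2$; since $\abs X$ is bounded in terms of $K$ alone, this still yields $d\le C(K)$ and $\abs P\le e^{C(K)}\abs A$ as required (it only costs you exponential-type dependence on $K$, which the statement permits; Sanders' refined constant is not needed here). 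With these two corrections your plan is a complete and correct proof of the quoted theorem.
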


It is known that the constant $C(K)$ can be taken equal $C(K) = (\log 2K)^{3+o(1)}$, see \cite{Sanders_survey}.

Also let us remind that a  multiplicative character $\chi$  modulo $p$ is a homomorphism from $\mathbb{F}^{*}_p$ into
the unit circle of the complex plane.
The character $\chi_0\equiv1$ is called trivial and the  conjugate to a character
 $\chi(x)$ is the character  $\overline{\chi}(x)=\overline{\chi(x)}=\chi(x^{-1})$.
The order of a character $\chi$ is the least positive integer $d$ such that $\chi^d=\chi_0$. One can read about properties of multiplicative characters in \cite{Stepanov} or \cite{IK}.

We need a variant of Andr\'{e} Weil's result (see  Theorem~11.23 in~\cite{IK}).

\begin{Th}[Weil]
Let $\chi$ be a nontrivial multiplicative character modulo $p$ of order $d$.
Suppose that a polynomial $f$ has $m$ distinct roots and there is no polynomial $g$ such that  $f=g^d$.
Then
$$\abs{\sum_{x\in\mathbb{F}_p}\chi\skob{f(x)}}\le(m-1)\sqrt p \,.$$
\end{Th}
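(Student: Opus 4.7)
The strategy will be to combine Freiman's theorem (to embed $A$ in a low-dimensional generalized arithmetic progression), a high-moment H\"older argument (to reduce the bilinear sum to a collection of character sums on rational functions), and Weil's theorem together with a Burgess-style shifting trick (to extract cancellation). The hypothesis $\abs{B+B}\le L\abs B$ will be used only to count the ``degenerate'' tuples arising in the moment expansion, which explains why the saving $\tau$ depends on $K$ but not on $L$.

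Concretely, the proof will proceed as follows. First apply Freiman's theorem to $A$ to obtain a proper GAP $P\supseteq A$ of dimension $d\le C(K)=(\log 2K)^{3+o(1)}$ with $\abs P\le e^{C(K)}\abs A$, written as $P = a_0+\{\sum_{j=1}^{d}x_j a_j:\,x_j\in[0,H_j-1]\}$. Next, for an integer parameter $k$ to be chosen later and $S=\sum_{a\in A,b\in B}\chi(a+b)$, use H\"older together with the pointwise bound $\mathbf{1}_A\le\mathbf{1}_P$ to pass to a moment sum over the GAP:
$$\abs S^{2k}\;\le\;\abs A^{2k-1}\sum_{a\in P}\Big|\sum_{b\in B}\chi(a+b)\Big|^{2k}.$$
Expanding the $2k$-th power recasts the right-hand side as a sum, over $2k$-tuples $(\mathbf b,\mathbf b')\in B^{2k}$, of character sums $\sum_{a\in P}\chi\bigl(F_{\mathbf b,\mathbf b'}(a)\bigr)$ with $F_{\mathbf b,\mathbf b'}(x)=\prod_i(x+b_i)/\prod_i(x+b'_i)$. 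To estimate the inner sum, freeze the GAP coordinates $x_2,\ldots,x_d$ and apply a Burgess-type shift to the one-dimensional sum over $x_1\in[0,H_1-1]$: introduce shifts $h$ from a short subprogression, replace $x_1$ by $x_1+h$ so that the shifted point still lies in a slightly dilated but comparable-size GAP, and apply a second H\"older over $h$. Weil's bound, applied to the polynomial $\prod_i(x+b_i)\prod_j(x+b'_j)^{d_0-1}$ with $d_0$ the order of $\chi$, then gives square-root cancellation whenever this polynomial is not a perfect $d_0$-th power. The ``bad'' tuples (those for which it is) correspond to matchings between the $b_i$ and $b'_j$; their number is controlled by high additive moments of $B$, bounded via Pl\"unnecke--Ruzsa from $\abs{B+B}\le L\abs B$.

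\textbf{Main obstacle.} The delicate task is to keep the dependence of $\tau$ on the GAP dimension $d$ only polynomial, since $d$ is itself already polylogarithmic in $K$. A one-coordinate Burgess argument naively incurs a loss from the ``frozen'' volume $H_2\cdots H_d$, so the H\"older parameter must be chosen at $k\asymp 1/\delta$ to absorb that loss while still preserving a polynomial saving; the arithmetic of this optimisation forces $\tau\asymp\delta^2/d=\delta^2(\log 2K)^{-3+o(1)}$ and simultaneously explains the hypothesis $\abs A>p^{1/3+\delta}$ via a requirement of the form $H_1\gtrsim p^{1/3+\delta/d}$. A secondary technical point is ensuring that the shifts $h$ keep $a+h\cdot a_1$ inside a set of size comparable to $\abs P$, which restricts $h$ to a subprogression slightly shorter than $[0,H_1-1]$ and relies crucially on $P$ being (essentially) proper.
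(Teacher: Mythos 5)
The statement you were asked to prove is Weil's theorem itself: the bound $\abs{\sum_{x\in\mathbb{F}_p}\chi\skob{f(x)}}\le(m-1)\sqrt p$ for a \emph{complete} character sum of a polynomial that is not a perfect $d$-th power. Your proposal does not address this statement at all. What you have sketched is a proof of the paper's \emph{main} theorem (the bilinear bound for $\sum_{a\in A,\,b\in B}\chi(a+b)$ under small-doubling hypotheses), and in the course of that sketch you explicitly invoke ``Weil's bound, applied to the polynomial $\prod_i(x+b_i)\prod_j(x+b'_j)^{d_0-1}$'' --- that is, you assume as a tool the very result you were supposed to establish. This is not a repairable gap but a complete mismatch between the target statement and the argument offered.

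For the record, the paper does not prove Weil's theorem either; it quotes it from Iwaniec--Kowalski (Theorem~11.23 there), so there is no internal proof to compare against. A genuine proof requires input of an entirely different nature from anything in your outline: either the Riemann hypothesis for the curve $y^d=f(x)$ over $\mathbb{F}_p$ (Weil's original approach, which writes the sum in terms of the inverse roots of the zeta function of the associated curve, each of modulus $\sqrt p$, their number being controlled by the genus and hence by the number $m$ of distinct roots of $f$), or Stepanov's elementary method of auxiliary polynomials, or the $L$-function formulation in which one bounds the degree of the relevant $L$-polynomial by $m-1$. None of the tools you list --- Freiman's theorem, high-moment H\"older, Burgess shifting --- bears on a complete sum over all of $\mathbb{F}_p$; indeed the Burgess machinery exists precisely to reduce \emph{incomplete} sums to complete ones and takes Weil's bound as its starting point.
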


Also we will use the H\"{o}lder inequality.

\begin{Lemma}[The H\"{o}lder inequality]
For any positive $p$ and $q$ such that  $\frac1p+\frac1q=1$ one has
$$\abs{\sum_{k=1}^{n} x_ky_k}\le\skob{\sum_{k=1}^n\abs{x_k}^p}^{\frac1p}\skob{\sum_{k=1}^n\abs{y_k}^q}^{\frac1q} \,.$$
In particular, we have the Cauchy--Schwarz inequality
$$\skob{\sum_{k=1}^{n}x_ky_k}^2\le\skob{\sum_{k=1}^nx_k^2}\skob{\sum_{k=1}^ny_k^2} \,.$$
\end{Lemma}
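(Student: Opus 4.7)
The plan is to combine Freiman's theorem applied to $A$ with a high-moment H\"older inequality, Fourier completion on the resulting Freiman box, Weil's bound for complete character sums, and a Pl\"unnecke--Ruzsa-based descent that exploits $\abs{B+B}\le L\abs B$. Write $S$ for the character sum of interest.

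First, I apply Freiman's theorem to $A$ to embed it in a proper generalized arithmetic progression $P$ of dimension $d\le C(K)=(\log 2K)^{3+o(1)}$ with $\abs P\le e^{C(K)}\abs A$. Fixing a positive integer parameter $k$ (to be chosen later), I apply H\"older's inequality in the outer variable $a$ and enlarge the outer sum from $A$ to $P$:
$$\abs{S}^{2k}\le \abs A^{2k-1}\sum_{a\in A}\abs{\sum_{b\in B}\chi(a+b)}^{2k}\le \abs A^{2k-1}\sum_{p\in P}\abs{\sum_{b\in B}\chi(p+b)}^{2k}.$$
Expanding the $2k$-th power produces a sum over tuples $(\vec b,\vec b')\in B^k\times B^k$ of inner sums $\sum_{p\in P}\chi(R_{\vec b,\vec b'}(p))$, where $R_{\vec b,\vec b'}(p)=\prod_i(p+b_i)/\prod_j(p+b'_j)$. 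The ``diagonal'' tuples, where $\vec b$ is a permutation of $\vec b'$, number at most $k!\abs B^k$ and contribute at most $k!\,e^{C(K)}\abs A\,\abs B^k$ in total. For the ``off-diagonal'' tuples $R_{\vec b,\vec b'}$ is nonconstant; expressing the indicator of $P$ as a Fourier series (with $L^1$-Fourier norm at most $p(C_0\log p)^d$ for a $d$-dimensional box in $\F_p$) and invoking Weil's theorem mode by mode bounds each inner sum by $\ll k(C_0\log p)^d\sqrt p$, for a total off-diagonal contribution of $\ll k(C_0\log p)^d\,\abs B^{2k}\sqrt p$.

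These bounds alone would give a nontrivial saving only when $\abs A>p^{1/2+\delta}$. To descend to $p^{1/3+\delta}$, I exploit $\abs{B+B}\le L\abs B$: by Pl\"unnecke--Ruzsa, $\abs{2B-B}\le L^3\abs B$, so the substitution $b\mapsto b_1+b_2-b_3$ spreads a single $b$-sum into a weighted average over $B^3$ supported on $2B-B$. This effectively multiplies the denominator $\abs A$ in the Weil-type term by a factor $\abs B^2/\abs{2B-B}\ge \abs B/L^3$, pushing the Weil threshold from $1/2$ to $1/3$. The polynomial dependence on $L$ it introduces is absorbed into the prime lower bound $p(\delta,K,L)$, so $L$ does not enter the final $\tau$. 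Optimizing $k$ (essentially $k\sim d/\delta$) to balance the diagonal and Weil contributions then yields the claimed $\tau=\delta^2(\log 2K)^{-3+o(1)}$.

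The main obstacle is the Pl\"unnecke--Ruzsa descent: one has to execute the substitution in $2B-B$ so that (i) Weil's non-degeneracy hypothesis is preserved uniformly across the enlarged index set, (ii) multiplicities in $2B-B$ are tracked accurately so that the full $\abs B/L^3$ amplification is realized, and (iii) every $L$-dependent factor is absorbed into the admissible prime threshold rather than into $\tau$. The $\delta^2$ in the final exponent then reflects the H\"older-versus-Weil trade-off once this descent is in place, while $(\log 2K)^3$ reflects the Freiman dimension $d$ entering the $L^1$-Fourier cost $(C_0\log p)^d$.
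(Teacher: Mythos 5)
Your proposal does not address the statement you were asked to prove. The statement is the classical H\"{o}lder inequality
$$\abs{\sum_{k=1}^{n} x_ky_k}\le\skob{\sum_{k=1}^n\abs{x_k}^p}^{\frac1p}\skob{\sum_{k=1}^n\abs{y_k}^q}^{\frac1q},$$
a standard fact from real analysis which the paper records as a lemma (without proof) purely for later use. What you have written instead is a strategy for the paper's main theorem on the character sum $\sum_{a\in A,\,b\in B}\chi(a+b)$: Freiman's theorem, Weil's bound, Pl\"unnecke--Ruzsa, and so on. None of that is relevant to, or capable of establishing, the inequality actually stated; indeed your argument already invokes H\"{o}lder's inequality as a tool, so it could not serve as a proof of it without circularity.

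A correct proof of the stated lemma is short and elementary. By the triangle inequality one may assume $x_k,y_k\ge 0$. If either factor on the right vanishes the claim is trivial; otherwise normalize so that $\sum_k x_k^p=\sum_k y_k^q=1$ and apply Young's inequality $xy\le \frac{x^p}{p}+\frac{y^q}{q}$ (valid for $x,y\ge 0$ and $\frac1p+\frac1q=1$, by concavity of the logarithm) termwise to get $\sum_k x_ky_k\le \frac1p\sum_k x_k^p+\frac1q\sum_k y_k^q=\frac1p+\frac1q=1$, which is the desired bound after undoing the normalization. The Cauchy--Schwarz inequality is the case $p=q=2$. I would encourage you to re-read which statement is being asked for before drafting an argument.
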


\section*{Some preliminary lemmas}

In paper~\cite{Shkr2} the following result was proved.

\begin{Th}\label{third_energy}
Let $A\subset\F_p$. Suppose that $|A|^{11}|A+A|\le p^8$. Then
$$\E^{\times}_3(A)\ll\frac{|A+A|^{15/4}}{|A|^{3/4}}\log{|A|}.$$
\end{Th}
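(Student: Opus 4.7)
The plan is to bound $\E^{\times}_3(A) = \sum_{\lambda} r(\lambda)^3$, where $r(\lambda) := |A \cap \lambda A|$, by a dyadic reduction on the level sets of $r$ combined with Rudnev's point--plane incidence theorem applied to an auxiliary configuration whose sizes are controlled by the additive doubling $|A+A| \le K|A|$.

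The first step is a dyadic pigeonhole. Since $r(\lambda) \le |A|$, at most $O(\log|A|)$ levels $P_\Delta := \{\lambda : \Delta < r(\lambda) \le 2\Delta\}$ contribute nontrivially, so up to a factor $\log|A|$ it suffices to bound $|P|\Delta^3$ for the dominating level $P$, where the identity $\sum_\lambda r(\lambda) = |A|^2$ already gives the crude bound $|P| \le |A|^2/\Delta$. The quantity $|P|\Delta^3$ has a natural incidence reading: for each $\lambda \in P$ the $\sim \Delta$ pairs $(a,a') \in A^2$ with $a = \lambda a'$ are collinear with the origin in $\F_p^2$, so the task reduces to bounding the number of collinear triples through the origin in $A \times A$ whose slopes lie in $P$.

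To access Rudnev's theorem I would linearize the bilinear equality $a_i a_j' - a_j a_i' = 0$ by introducing auxiliary additive shifts $s \in A+A$. Plünnecke--Ruzsa, applied to the hypothesis $|A+A| \le K|A|$, delivers $|nA - mA| \le K^{O(1)}|A|$ for all small $n,m$, so after the shift the point set in $\F_p^3$ and the $(A+A)$-parameterised plane family both have controlled sizes. The hypothesis $|A|^{11}|A+A| \le p^8$ is precisely the threshold guaranteeing that the main term of Rudnev's bound dominates over the $p^{3/2}$-order residue-field error term. Applying Rudnev and rearranging then yields $|P|\Delta^3 \ll |A+A|^{15/4}|A|^{-3/4}$, and summing over the $O(\log|A|)$ dyadic levels restores the $\log|A|$ factor in the statement.

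The main obstacle is the correct choice of this point--plane configuration: the shifts and plane equations must be arranged so that an incidence corresponds to a genuine contribution to $\E^{\times}_3(A)$ without overcounting, while simultaneously balancing the numbers of points and planes so as to recover exactly the exponent $15/4$ on $|A+A|$ and $-3/4$ on $|A|$ rather than weaker exponents. A secondary subtlety is handling the concurrency parameter in Rudnev's bound by appealing to the structure of $A+A$ (via the Plünnecke estimates on higher sumsets) rather than only the raw doubling constant, since an uncontrolled concurrent-points term would inflate the final exponent.
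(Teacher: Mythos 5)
Theorem~\ref{third_energy} is not proved in this paper at all: it is quoted as an external input from~\cite{Shkr2} (Murphy, Petridis, Roche--Newton, Rudnev and Shkredov), so the only proof available for comparison is the one in that reference. Your sketch does name the right toolkit used there: dyadic pigeonholing on the level sets of $r_{A/A}(\lambda)=|A\cap\lambda A|$, reduction of the popular level to an incidence count, and Rudnev's point--plane incidence theorem over $\F_p$, with the hypothesis $|A|^{11}|A+A|\le p^8$ playing exactly the role you assign to it (keeping the configuration below the threshold at which the field-size error term takes over). To that extent the plan points in the right direction.

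As a proof, however, it has a genuine gap, and you flag it yourself: the entire content of the theorem is the specific incidence configuration that converts $a_1/a_1'=a_2/a_2'$ into point--plane incidences while importing the \emph{additive} structure --- some Katz--Koester--type device (e.g.\ $(A\cap\lambda A)+A\subseteq(A+A)\cap(\lambda A+A)$) is what makes $|A+A|$, rather than $|A|$, appear in a bound for the purely multiplicative quantity $\E^{\times}_3(A)$ --- together with the optimisation over the dyadic parameter $\Delta$ that produces the exponents $15/4$ and $-3/4$. ``Apply Rudnev and rearrange'' is not a checkable step: a single application of the incidence bound to one dyadic level does not yield $|P|\Delta^3\ll|A+A|^{15/4}|A|^{-3/4}$ directly; one typically obtains an estimate on a lower moment of $|P|$ and $\Delta$ which must then be interpolated against the trivial bound $|P|\le|A|^2/\Delta$, and the fractional exponents are precisely the fingerprint of that interpolation, which your sketch omits. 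Two further inaccuracies: the theorem has no hypothesis $|A+A|\le K|A|$, so invoking Pl\"unnecke--Ruzsa ``applied to the hypothesis'' is a misreading, and uncontrolled $K^{O(1)}$ losses from iterated sumsets would contaminate the clean exponent $15/4$; and the collinearity parameter in Rudnev's theorem is controlled in the reference by an elementary count of configuration points on a line, not by appealing to ``the structure of $A+A$''. In short, this is a reasonable plan of attack identifying the correct source and tools, but the decisive construction is missing.
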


\begin{Lemma}\label{system_solution}
Suppose that
$A,\,B\subset\mathbb{F}_p$ are any sets and $K,L$ are positive numbers such that
\begin{gather*}
\abs{A},\abs{B} < \sqrt p,\\
\abs{A+A}<K|A|,\\
\abs{B+B}<L\abs B.
\end{gather*}
Then  the system of equations
\begin{equation}\begin{cases}
\frac{b_1}a=\frac{b_1'}{a'}\\
\frac{b_2}a=\frac{b_2'}{a'}
\end{cases}
\label{system}
\end{equation}
has
\begin{equation}\label{f:bound_E^t_3}
    O(K^{5/4}L^{5/2}|A||B|^2\log p
        + |A|^2 |B|)
\end{equation}
solutions in the variables $(a,a',b_1,b_1',b_2,b_2')\in A^2\times B^4$.
\end{Lemma}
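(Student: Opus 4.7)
The plan is to express the solution count of (\ref{system}) as a bilinear sum in multiplicative representation functions, apply H\"older's inequality, and then invoke Theorem~\ref{third_energy} on both $A$ and $B$. For each $r \in \F_p^*$ set
\[
N(r) = |\{(a,a') \in (A \setminus \{0\})^2 : a = r a'\}|, \qquad M(r) = |\{(b,b') \in (B \setminus \{0\})^2 : b = r b'\}|.
\]
Provided no variable vanishes, the system (\ref{system}) says precisely $a/a' = b_1/b_1' = b_2/b_2'$, so the number of such non-degenerate solutions equals $\sum_{r \in \F_p^*} N(r)\, M(r)^2$.

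By H\"older's inequality with exponents $3$ and $3/2$ one has
\[
\sum_r N(r) M(r)^2 \le \left(\sum_r N(r)^3\right)^{1/3} \left(\sum_r M(r)^3\right)^{2/3} = \E^{\times}_3(A)^{1/3}\,\E^{\times}_3(B)^{2/3}.
\]
The hypothesis $|A|^{11}|A+A| \le p^8$ of Theorem~\ref{third_energy} follows from $|A|<\sqrt p$, since $|A|^{11}|A+A| < p^{11/2}\cdot p = p^{13/2} \le p^8$; the analogous bound holds for $B$. Therefore $\E^{\times}_3(A) \ll K^{15/4}|A|^3\log p$ and $\E^{\times}_3(B) \ll L^{15/4}|B|^3\log p$, and substitution produces
\[
\sum_r N(r) M(r)^2 \ll K^{5/4} L^{5/2}\, |A|\, |B|^2\, \log p,
\]
which is the main term in (\ref{f:bound_E^t_3}).

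It remains to bound tuples having some coordinate equal to zero. The only configuration not already absorbed by the preceding estimate is when one of the pairs $(b_1,b_1')$ or $(b_2,b_2')$ is $(0,0)$: if $b_1 = b_1' = 0$ the first equation of (\ref{system}) becomes vacuous and only $a b_2' = a' b_2$ remains, which for each triple $(a,a',b_2)$ with $a\neq 0$ determines $b_2'$ uniquely, giving at most $O(|A|^2|B|)$ solutions. The symmetric case $(b_2,b_2') = (0,0)$ is identical, and all further degenerate configurations contribute even less. Adding this to the main term yields (\ref{f:bound_E^t_3}). The entire argument thus reduces to a single H\"older step combined with Theorem~\ref{third_energy}; the only mildly delicate point is the bookkeeping for the zero-coordinate cases, and that is what forces the additive $|A|^2|B|$ term.
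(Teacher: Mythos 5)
Your proposal is correct and follows essentially the same route as the paper: count nondegenerate solutions via the ratio representation functions, apply H\"older with exponents $3$ and $3/2$ to bound the count by $\E^{\times}_3(A)^{1/3}\E^{\times}_3(B)^{2/3}$, invoke Theorem~\ref{third_energy} (whose hypothesis holds since $|A|,|B|<\sqrt p$), and absorb the zero cases into the $O(|A|^2|B|)$ term. No gaps worth noting.
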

\begin{proof}
Clearly, the number of trivial solutions, where $b_1=b_1'=b_2=b_2'=0$ and $a_1,a_2\in A$ are any numbers does not exceed $\abs{A}^2$. Moreover the number of solutions, where $b_1=b_1'=0$ and $b_2'=\frac{a'b_2}{a}$ does not exceed $\abs{A}^2\abs B$ and this gives us the second term in (\ref{f:bound_E^t_3}).
Below we will assume that all numerators in (\ref{system}) are nonzero.

For any $\lambda\in\mathbb{F}_p$ put
\begin{gather*}
f(\lambda) = \abs{\ost{(a,a')\in A^2\,:\,\lambda = \frac a{a'}}} \,,\\
g(\lambda) = \abs{\ost{(b,b')\in B^2\,:\,\lambda = \frac b{b'}}} \,.
\end{gather*}

Further, the systems of the equations (\ref{system}) can be rewritten in an equivalent form, namely,
$$\frac{a}{a'}=\frac{b_1}{b_1'}=\frac{b_2}{b_2'} \,.$$
Whence the number of its solutions equals $\sum\limits_{\lambda\in\mathbb{F}_p}f(\lambda)g(\lambda)^2$. Estimating this sum by the H\"{o}lder inequality and applying Theorem~\ref{third_energy}, we complete the proof of the Lemma
\begin{multline*}
\sum_{\lambda\in\mathbb{F}_p}f(\lambda)g(\lambda)^2\le\skob{\sum_{\lambda\in\mathbb{F}_p}f(\lambda)^3}^{\frac13}\skob{\sum_{\lambda\in\mathbb{F}_p}g(\lambda)^3}^{\frac23}=(\E^{\times}_3(A))^{\frac13}(\E^{\times}_3(B))^{\frac23}\ll\\ \ll\left(\frac{|A+A|^{15/4}}{|A|^{3/4}}\log{|A|}\right)^{\frac13}\left(\frac{|B+B|^{15/4}}{|B|^{3/4}}\log{|B|}\right)^{\frac23}\le K^{5/4}L^{5/2}|A||B|^2\log p.
\end{multline*}
The conditions of Theorem~\ref{third_energy} are met, as $|A|^{11}|A+A|\le p^{\frac{11}2+1}\le p^8$ and $|B|^{11}|B+B|\le p^{\frac{11}2+1}\le p^8$.
\end{proof}

Weil's Theorem implies the following result, see Lemma~14~in paper~\cite{Shkr_Vol}.

\begin{Lemma}\label{davenport}
For any nontrivial character $\chi$, an arbitrary  set  $I\subset\mathbb{F}_p$ and a positive integer $r$ one has
$$\sum_{u_1,u_2\in \mathbb{F}_p}\abs{\sum_{t\in I}\chi(u_1+t)\overline{\chi}(u_2+t)}^{2r}< p^2\abs{I}^rr^{2r}+4r^2p\abs{I}^{2r} \,.$$
\end{Lemma}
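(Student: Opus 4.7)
The plan is to expand the $2r$-th power, observe that the sum over $(u_1,u_2)$ decouples into a squared one-variable character sum, and then invoke Weil's theorem.

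First, using the identity $\abs{z}^{2r}=z^r\bar z^r$, I would write
$$\abs{\sum_{t\in I}\chi(u_1+t)\overline{\chi}(u_2+t)}^{2r}=\sum_{\vec t,\vec s\in I^r}\chi\!\skob{F_{\vec t,\vec s}(u_1)}\,\overline{\chi}\!\skob{F_{\vec t,\vec s}(u_2)},$$
where $F_{\vec t,\vec s}(u)=\prod_{i=1}^{r}(u+t_i)/(u+s_i)$. Summing over $(u_1,u_2)\in\mathbb{F}_p^2$ and interchanging the order of summation, the sum over $(u_1,u_2)$ factorises as $\abs{S(\vec t,\vec s)}^2$, where $S(\vec t,\vec s):=\sum_{u\in\mathbb{F}_p}\chi(F_{\vec t,\vec s}(u))$ (with the convention $\chi(0)=0$). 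The task thereby reduces to estimating $\sum_{\vec t,\vec s\in I^r}\abs{S(\vec t,\vec s)}^2$.

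Next, let $d$ denote the order of $\chi$. Since $\chi(P/Q)=\chi(PQ^{d-1})$, one has $S(\vec t,\vec s)=\sum_u\chi(f(u))$ for the polynomial $f(u):=P(u)Q(u)^{d-1}$, with $P(u)=\prod_i(u+t_i)$ and $Q(u)=\prod_i(u+s_i)$. Note that $f$ has at most $2r$ distinct roots. I would split the $I^r\times I^r$ pairs into two classes. If $f$ is not a $d$-th power, Weil's theorem gives $\abs{S(\vec t,\vec s)}\le(2r-1)\sqrt p$, whence $\abs{S}^2<4r^2p$; summing over the at most $\abs{I}^{2r}$ such pairs yields strictly less than $4r^2p\abs{I}^{2r}$, matching the second term of the bound. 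Otherwise, I fall back on the trivial estimate $\abs{S(\vec t,\vec s)}\le p$.

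The main obstacle I anticipate is counting the \emph{singular} pairs $(\vec t,\vec s)$ for which $PQ^{d-1}$ is a $d$-th power. These are exactly the pairs satisfying $c_t(\vec t)\equiv c_t(\vec s)\pmod d$ for every $t\in I$, equivalently those for which there exists a polynomial $g$ of degree $r$ with $PQ^{d-1}=g^d$. To bound their number, I would first choose the multiset of roots of $g$---at most $\abs{I}^r$ options---and then, for each root of $g$ of multiplicity $n$, choose how to split the total multiplicity $dn$ between $P$ and $Q$ in one of at most $2r+1$ admissible ways. A crude product estimate bounds the singular count by $r^{2r}\abs{I}^r$, and combining with the trivial bound $\abs{S}^2\le p^2$ contributes $p^2\abs{I}^rr^{2r}$, which is precisely the first term of the claimed inequality.
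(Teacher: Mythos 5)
Your overall strategy --- expanding the $2r$-th power, letting the sum over $(u_1,u_2)$ factor into $\abs{S(\vec t,\vec s)}^2$ with $S(\vec t,\vec s)=\sum_{u}\chi\skob{P(u)Q(u)^{d-1}}$, applying Weil's bound $(2r-1)\sqrt p$ when $PQ^{d-1}$ is not a $d$-th power and the trivial bound otherwise --- is exactly the standard route, i.e.\ the argument behind Lemma~14 of \cite{Shkr_Vol}, which is what the paper cites for this statement. The gap is in your count of the degenerate ("singular") terms. The quantity you must bound is the number of \emph{ordered} tuples $(\vec t,\vec s)\in I^r\times I^r$ for which $PQ^{d-1}$ is a $d$-th power, but your enumeration (choose the multiset of roots of $g$, then split each multiplicity between $P$ and $Q$) only parameterizes the possible pairs of \emph{polynomials} $(P,Q)$, i.e.\ pairs of multisets. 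A given pair $(P,Q)$ is hit by up to $(r!)^2$ ordered tuples, and once this factor is inserted your product estimate becomes $\abs{I}^r(2r+1)^r(r!)^2$, which is far larger than the required $\abs{I}^r r^{2r}$ (for large $r$ it exceeds it by roughly a factor $(2r/e^2)^r$); even ignoring orderings, the product $\abs{I}^r(2r+1)^r$ is not $\le \abs{I}^r r^{2r}$ for $r\le 2$. So, as written, the first term of the lemma is not established.

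The repair is standard and close to what you already have. Degeneracy means that for every value $v$ the multiplicities satisfy $a_v\equiv b_v \pmod d$, where $a_v,b_v$ are the numbers of coordinates of $\vec t$, resp.\ $\vec s$, equal to $v$; since $\chi$ is nontrivial, $d\ge 2$, so $(a_v,b_v)=(1,0)$ or $(0,1)$ is impossible and every value occurring among the $2r$ coordinates $t_1,\dots,t_r,s_1,\dots,s_r$ occurs at least twice (equivalently, all coordinates lie in the root set of $g$, which has at most $\deg g=r$ elements). Hence a degenerate ordered tuple takes at most $r$ distinct values, so when $\abs{I}\ge r$ their number is at most $\binom{\abs{I}}{r}r^{2r}\le\abs{I}^r r^{2r}$, and when $\abs{I}<r$ it is trivially at most $\abs{I}^{2r}<\abs{I}^r r^{2r}$. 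Combined with the trivial bound $\abs{S(\vec t,\vec s)}\le p$ this yields the first term $p^2\abs{I}^r r^{2r}$, and the remainder of your argument (the factorization over $(u_1,u_2)$, the identity $\chi(P)\overline{\chi}(Q)=\chi(PQ^{d-1})$ with the convention $\chi(0)=0$, and Weil's bound with $m\le 2r$ distinct roots) is correct.
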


\section*{The proof of the main result}

\begin{proof}[The proof of Theorem~\ref{main_theorem}]
The beginning of the proof is similar to the arguments from~\cite{Shkr_Vol}. We will assume that $|A|, |B| < \sqrt{p}$. According the Freiman structural theorem on sets with small doubling
there is a generalized arithmetic progression $A_1=a_0+P\subseteq \mathbb{F}_p$ of the dimension $d$, where $$P=\ost{\sum_{j=1}^dx_ja_j\,:\,x_j\in\floor{0,H_j-1}}$$
such that
$$A\subset A_1$$
$$d\le C(K)$$
$$\abs{A_1}<e^{C(K)}\abs{A} \,.$$
Put
$$\alpha=\frac{3\delta}{4d},\quad
r=\left\lceil\frac1\alpha \right\rceil\,.$$
Take the interval $I=\floor{1,p^\alpha}$ and the generalized progression $A_0$ of the dimension $d$ defined as
$$A_0=\ost{\sum_{j=1}^dx_ja_j\,:\,x_j\in\floor{0,p^{-2\alpha}H_j}} \,.$$
Clearly,
\begin{equation}
\abs{A_0}\ge p^{-2d\alpha}\abs{A_1}\ge p^{-2d\alpha}\abs{A}
\label{2}
\end{equation}
and
\begin{equation}
\abs{A_0+A_0}\le 2^d\abs{A_0} \,. \label{A0+A0}
\end{equation}
Because $A_0I\subseteq \ost{\sum\limits_{j=1}^dx_ja_j\,:\,x_j\in\floor{0,p^{-\alpha}H_j}}$ and hence
\begin{equation*}
    A-A_0I \subseteq \ost{\sum\limits_{j=1}^dx_ja_j\,:\,x_j\in\floor{-p^{-\alpha} H_j, H_j}}
\end{equation*}
we, clearly, get
\begin{equation}\label{A-A_0I}
    \abs{A-A_0I}\le\skob{1+p^{-\alpha}}^d\abs{A_1}\le e^{C(K)}\skob{1+p^{-\alpha}}^d\abs{A}\le e^{C(K)}2^d\abs{A} \,.
\end{equation}

Let us fix $x\in A_0, y\in I$ and estimate the sum
\begin{multline}\abs{\sum_{a\in A,\, b\in B}\chi(a+b)}\le\sum_{a\in A}\abs{\sum_{b\in B}\chi(a+b)}=\\ =\sum_{a\in A-xy}\abs{\sum_{b\in B}\chi(a+b+xy)}\le\sum_{a\in A-A_0I}\abs{\sum_{b\in B}\chi(a+b+xy)}.\label{9}\end{multline}
The numbers $x\in A_0$, $y\in I$ can be taken in such a way that the last sum in~(\ref{9}) does not exceed the mean, whence
\begin{equation}\abs{\sum_{a\in A,\, b\in B}\chi(a+b)}\le\frac1{\abs{A_0}\abs{I}}\sum_{\substack{a\in A-A_0I,\\ x\in A_0,\, y\in I}}\abs{\sum_{b\in B}\chi(a+b+xy)}.
\label{rand}\end{equation}
Now having  any fixed $a\in A-A_0I$, let us estimate the sum $$\sum_{x\in A_0,\, y\in I}\abs{\sum_{b\in B}\chi(a+b+xy)}=\sum_{x\in A_0,\, y\in I}\abs{\sum_{b\in B_a}\chi(b+xy)} \,.$$
Here we have denoted $B_a=a+B$. By the Cauchy--Schwarz inequality, we get
\begin{multline}\skob{\sum_{x\in A_0,\, y\in I}\abs{\sum_{b\in B_a}\chi(b+xy)}}^2\le\\ \le\skob{\sum_{x\in A_0,\, y\in I}1}\skob{\sum_{x\in A_0,\, y\in I}\abs{\sum_{b\in B_a}\chi(b+xy)}^2}=\\ =\abs{A_0}\abs{I}\skob{\sum_{\substack{x\in A_0,\, y\in I,\\ b_1,\, b_2\in B_a}}\chi(b_1+xy)\overline{\chi}(b_2+xy)}.
\label{kbsh}
\end{multline}
For any pair $(u_1,u_2)\in\mathbb{F}^2_p$ put
$$\nu(u_1,u_2)=\abs{\ost{(b_1,b_2,x)\in B_a^2\times A_0\,:\,\frac{b_1}x=u_1
\mbox{ Х } \frac{b_2}x=u_2}} \,.$$

Then for any  $x\neq 0$, we have
\begin{multline}
\sum_{\substack{x\in A_0,\, y\in I,\\ b_1,b_2\in B_a}}\chi(b_1+xy)\overline{\chi}(b_2+xy)=\\
=\sum_{\substack{x\in A_0,\, y\in I,\\ b_1,b_2\in B_a}}\chi(b_1x^{-1}+y)\overline{\chi}(b_2x^{-1}+y)=\\
=\sum_{u_1,u_2\in\mathbb{F}_p^2}\nu(u_1,u_2)\sum_{y\in I}\chi(u_1+y)\overline{\chi}(u_2+y)\le\\
\le\skob{\sum_{u_1,u_2}\nu(u_1,u_2)}^{1-\frac1r} \skob{\sum_{u_1,u_2}\nu(u_1,u_2)^2}^{\frac{1}{2r}} \times\\ \times \skob{\sum_{u_1,u_2}\abs{\sum_{t\in I}\chi(u_1+t)\overline{\chi}(u_2+t)}^{2r}}^{\frac1{2r}}.
\label{main_esteem}
\end{multline}
The inequality in~(\ref{main_esteem}) follows from the H\"{o}lder inequality  and the Cauchy--Schwarz inequality.
By Lemma~\ref{davenport}
\begin{multline}\skob{\sum_{u_1,u_2}\abs{\sum_{t\in I}\chi(u_1+t)\overline{\chi}(u_2+t)}^{2r}}^{\frac1{2r}}<\skob{p^2\abs{I}^rr^{2r}+4r^2p\abs{I}^{2r}}^{\frac1{2r}}\le\\ \le r\abs{I}^{\frac12}p^{\frac1r}+(2r)^{\frac1r}p^{\frac1{2r}}\abs{I}\le2rp^{\frac1{2r}}\abs{I}\,.\label{3.22}\end{multline}
The last inequality takes place because $\abs{I}\ge p^{\frac1{r}}$ and $r\ge 2$.
Further note that
\begin{equation}
\sum_{u_1,u_2\in \mathbb{F}_p}\nu(u_1,u_2)=\abs{B}^2\abs{A_0},
\label{3.19}
\end{equation}
and by Lemma~\ref{system_solution}, combining with inequalities (\ref{0}), (\ref{1}), (\ref{-1}), (\ref{2}) and (\ref{A0+A0}),  we obtain
\begin{multline}
\sum_{u_1,u_2\in \mathbb{F}_p}\nu(u_1,u_2)^2=\\
=\abs{\ost{(x,x',b_1,b_1',b_2,b_2')\in A_0^2\times B_a^4\,:\,\frac{b_i}x=\frac{b'_i}{x'}\text{ ДКЪ $i=1,2$}}}\ll\\
\ll 2^{\frac{5d}4}L^{\frac52}\abs{A_0}\abs{B_a}^{2}\log p
    + |A_0|^2 |B_a| \ll\\
\ll \skob{\abs{A_0}\abs B^2}^{2}\skob{2^{\frac{5d}4}L^{\frac52}\abs{A_0}^{-1}\abs B^{-2}\log p + \abs{B}^{-3}}\ll\\
\ll\skob{\abs{A_0}\abs B^2}^{2}2^{\frac{5d}4}L^{\frac52}p^{2d\alpha-3\skob{\frac13+\delta}}\log p=\\
= \skob{\abs{A_0}\abs B^2}^{2}2^{\frac{5d}4}L^{\frac52}p^{2d\alpha-3\delta-1}\log p \,.
\label{3.20}
\end{multline}
Using estimates~(\ref{kbsh})---(\ref{3.20}),
we see that
\begin{equation*}
\skob{\sum_{x\in A_0, y\in I}\abs{\sum_{b\in B_a}\chi(b+xy)}}^2\ll\skob{\abs{A_0}\abs I\abs B}^2r2^{\frac {5d}{8r}} L^{\frac{5}{4r}}p^{\frac{d\alpha}{r}-\frac{3\delta}{2r}}\log^{\frac1{2r}} p
    \,.
\end{equation*}
Because
$\alpha = \frac{3\delta}{4 d}$ and hence $r\ge \frac1\alpha = \frac{4d}{3\delta}$ and $r\le \frac1\alpha+1=\frac{4d+3\delta}{3\delta}\le \frac{5d}{3\delta}$, we obtain further
\begin{equation}
\skob{\sum_{x\in A_0, y\in I}\abs{\sum_{b\in B_a}\chi(b+xy)}}^2
\ll\skob{\abs{A_0}\abs I\abs B}^2\frac d\delta L^{\frac{15\delta}{16d}}p^{-\frac{3\delta}{4r}}\log^{\frac1{2r}} p \,.
\label{3.24}
\end{equation}
Bound~(\ref{3.24}) takes place for any $a$ and thus inequalities (\ref{A-A_0I}), (\ref{rand})
imply
\begin{multline}
\abs{\sum_{a\in A,b\in B}\chi(a+b)}\ll \sqrt{\frac d\delta} L^{\frac{15\delta}{32}}p^{-\frac{3\delta}{8r}}\abs{A-A_0I}\abs{B}\log^{\frac1{4r}} p\ll\\ \ll \sqrt{\frac d\delta} L^{\frac{15\delta}{32d}}2^de^{C(K)}p^{-\frac{9\delta^2}{40d}}\abs A\abs{B}\log^{\frac1{4r}} p \,.
\label{final_inequality}
\end{multline}
The theorem follows from~(\ref{final_inequality}) if one takes $\tau=\frac{\delta^2}{100 C(K)}$, for example.
\end{proof}

\begin{note}
From inequality (\ref{final_inequality}) it is easy to find the quantity $p(\delta, K,L)$ in a precise way.
Indeed, it is enough to choose $p$ such that
$\log p \gg \frac{C^2(K)}{\delta^2}$, $\log p \gg \frac{C(K)}{\delta^2}\log{\frac1\delta}$ and $\log p \gg \frac{C(K) \log L }{\delta}$.
It shows that we have subexponential dependence of the constants $K, L$ on $p$ in our theorem.
\label{p(d,K,L)}
\end{note}

\noindent{A.S.~Volostnov\\
Steklov Mathematical Institute of Russian Academy of Sciences,\\
ul. Gubkina, 8, Moscow, Russia, 119991.}\\
{\tt gyololo@rambler.ru}

\end{document}